\newtheorem{thrm}{Theorem}[section]
\newtheorem{lemma}[thrm]{Lemma}
\newtheorem{prop}[thrm]{Proposition}
\newtheorem{cor}[thrm]{Corollary}
\theoremstyle{definition}
\theoremstyle{remark}
\numberwithin{equation}{section}
\newcommand{\dbar}{$\bar{\partial}$}
\newcommand{\dq}{\overline{\partial}}
\newcommand{\mdbar}{\bar{\partial}}
\newcommand{\R} {\ensuremath{\mathbb{R}}}
\newcommand{\C} {\ensuremath{\mathbb{C}}}
\begin{document}

\title[Subelliptic estimates on a singular spaces]
 {Subelliptic estimates
for the \dbar-problem on a singular complex space}

\author[D. Ehsani, J. Ruppenthal]
 {Dariush Ehsani, Jean Ruppenthal}

\address{
Bergische Universit\"{a}t Wuppertal\\
Fachbereich C - Mathematik \& Naturwissenschaften\\
D-42119 Wuppertal}
 \email{dehsani.math@gmail.com}
 \email{jean.ruppenthal@math.uni-wuppertal.de}

\begin{abstract}
We prove subelliptic estimates for the $\dq$-problem at the isolated singularity
of the variety $z^2=xy$ in $\C^3$.

\end{abstract}

\maketitle

\bibliographystyle{plain}

\section{Introduction}

The Cauchy-Riemann operator $\dq$ and the related \dbar-Neumann operator play a central role in complex analysis.
Especially the $L^2$-theory for these operators is of particular importance and has become indispensable for the subject
after the fundamental work of
H\"ormander on $L^2$-estimates and existence theorems for the \dbar-operator \cite{Hoe1}
and the related work of Andreotti and Vesentini \cite{AnVe}.
By no means less important is Kohn's solution of the $\dq$-Neumann problem,
which implies existence and regularity results for the $\dq$-complex, as well (see \cite{FK}).
Important applications of the $L^2$-theory are e.g. the Ohsawa-Takegoshi extension theorem \cite{OT},
Siu's analyticity of the level sets of Lelong numbers \cite{Siu0}
or the invariance of plurigenera \cite{Siu} -- just to name some.

Whereas the theory of the $\dq$-operator and the $\dq$-Neumann operator is very well developed on
complex manifolds, still not too much is known about the situation on singular complex spaces which
appear naturally as the zero sets of holomorphic functions.
The further development of this theory is an important endeavor
since analytic methods have led to fundamental advances in geometry on complex manifolds,
but these analytic tools are still missing on singular spaces.

The topic has attracted some attention recently and considerable progress has been made.
Let us mention e.g. the development of some Koppelman formulas by Andersson and Samuelsson (\cite{AS1}, \cite{AS2}).
Concerning the $L^2$-theory for the $\dq$-operator, {\O}vrelid and Vassiliadou
obtained essential results for the case of isolated singularities.
Following a path prepared by 
Forn{\ae}ss, Diederich, Vassiliadou and {\O}vrelid
(\cite{Fo}, \cite{DFV}, \cite{FOV2}, \cite{OV1}, \cite{OV2}) and by
Ruppenthal and Zeron (see \cite{Rp1}, \cite{Rp7}, \cite{Rp8}, \cite{RZ1}, \cite{RZ2}),
they were finally able to describe the $L^2$-cohomology for the $\dq$-operator
at isolated singularities completely in terms of a resolution of singularities (see \cite{OV3}).
For another, different approach to these results we refer also to \cite{Rp10}.

These works mark important progress concerning the understanding
of the obstructions to solving the $\dq$-equation at isolated
singularities. It remains to study the regularity of the equation:
on domains in complex manifolds, the close connection between the
regularity of the $\dq$-equation on one hand and the geometry of
the domain (and its boundary) on the other hand is one of the
central topics of complex analysis. Recall that a domain in $\C^n$
with smooth pseudoconvex boundary is of finite type exactly if the
$\dq$-Neumann problem is subelliptic, and that there is a deep
connection between the type of the boundary and the order of
subellipticity (cf. the works of Kohn, Catlin and D'Angelo). It is
an interesting endeavor to establish such connections also between
the regularity of the $\dq$-equation at singularities and the
geometry of the singularities.

Compactness of the $\dq$-Neumann operator, which can be seen as a boundary case of subelliptic regularity,
has been established at isolated singularities recently in \cite{Rp9} and \cite{OR}.
Compactness of the $\dq$-Neumann operator is an important property
in the study of weakly pseudoconvex domains. 
Moreover, it yields that the corresponding space of $L^2$-forms
has an orthonormal basis consisting of eigenforms of the $\dq$-Laplacian $\Box=\dq\dq^* +\dq^*\dq$.
The eigenvalues of $\Box$ are non-negative, have no finite limit point
and appear with finite multiplicity.

In the present paper, we make the next step and study subelliptic estimates for the $\dq$-problem
at an isolated singularity.
Besides the general question whether is is possible to classify singularities by the degree of subelliptic
estimates which hold, the main motivation is as follows.
It is one of the key observations in \cite{FOV2}, \cite{Rp7}, \cite{OV3}, \cite{Rp10}
that the $\dq$-equation can be solved in the $L^2$-category at isolated singularities with some gain of regularity,
and one reason why the theory is not well developed for arbitrary singularities is that such results do not yet
exist in that case (one can only solve in the $L^2$-category but something better is needed).
In view of such questions it is natural to consider the canonical $\dq$-solution operator $\dq^* N$,
where $N$ is the $\dq$-Neumann operator, and to study regularity of $N$ and $\dq^*N$.

\medskip
To begin with, it makes sense to consider a simple example.
So, we decided to study the isolated singularity of the variety $Z:=\{z_3^2=z_1 z_2\}$ in $\C^3$.
Let $X$ be the intersection of $Z$ with the cylinder
  $\{|z_1|^2+|z_2|^2<1\}$ in $\C^3$.  We note that
  $X$ has a strictly pseudoconvex boundary
and consider $X$ as a Hermitian complex space with the restriction of the Euclidean metric of $\C^3$.
We define the weighted $L^2$-spaces of $(p,q)$-forms on $X$:
\begin{equation*}
 L^{2,k}_{(p,q)}(X)=\left\{ f: \|f\|_{L^{2,k}}^2=\int_X\gamma^{2k}
  |f|^2 dV_X <\infty \right\},
\end{equation*}
with the weight
\begin{equation*}
\gamma = \sqrt{|z_1|+ |z_2|}.
\end{equation*}
Note that $\gamma^2 \sim \|z\|= \sqrt{|z_1|^2+|z_2|^2+|z_3|^2}$ on
$X$. For ease of notation, we will write simply $\|f\|$ for
$\|f\|_{L^{2,0}}$.

We also make use of some Sobolev spaces on $X$.  To introduce
these spaces, we consider the $2$-sheeted covering $\pi:
\C^2\rightarrow Z, (v,w) \mapsto (z_1, z_2, z_3)=(v^2,w^2,vw)$ for
which $\pi^* dV_X \sim (\pi^* \gamma^4) dV_{\C^2}$. We say that a
function $f$ is in $W^k(X)$ if all the partial derivatives up to
order $k$ of $\pi^* f$ with respect to the $(v,w)$-coordinates are
square-integrable with respect to the volume $(\pi^* \gamma^4)
dV_{\C^2}$ on $\C^2$ (see \eqref{sobolev1}). Non-integer Sobolev
spaces $W^\epsilon(X)$ are then defined as usually by use of the
Fourier transform on $\C^2$ (see \eqref{sobolev2}). For
differential forms, we define the $W^\epsilon$-norm as the sum of
the $W^\epsilon$-norms of its coefficients in an orthonormal
coordinate system.

Our first main result is as follows:

\begin{thrm}
\label{weightestI}
 Let $f\in
L^{2,-1}_{(0,1)}(X)\cap\mbox{dom}(\mdbar)\cap
\mbox{dom}(\mdbar^{\ast})$ with support in a neighborhood of the
singularity. Then we have the estimate
\begin{equation*}
\|f\|_{W^1(X)}^2
 \lesssim \| \mdbar f\|^2 + \| \mdbar^{\ast} f\|^2
  +\|f\|^2_{L^{2,-1}}.
\end{equation*}
\end{thrm}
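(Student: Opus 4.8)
The plan is to lift the whole problem to $\C^2$ by means of the normalization $\pi(v,w)=(v^2,w^2,vw)$, which is biholomorphic away from the origin and exhibits $X$ as the quotient of $\widetilde X:=\pi^{-1}(X)\subset\C^2$ by the involution $(v,w)\mapsto(-v,-w)$. Since $f$ is supported near the isolated singularity, which is an \emph{interior} point of $X$, the pull-back $u:=\pi^\ast f=u_1\,d\bar v+u_2\,d\bar w$ is supported near $0\in\C^2$ and away from $b\widetilde X$; thus the asserted inequality becomes an interior estimate on $\C^2$ for a metric that degenerates only at the origin. First I would set up the dictionary. Using $\pi^\ast dV_X\sim(\pi^\ast\gamma^4)\,dV_{\C^2}$ together with the distortion $\pi^\ast g_X\sim(\pi^\ast\gamma^2)\,\delta$ of the induced metric ($\delta$ the Euclidean metric, so that a $(0,q)$-covector picks up a factor $(\pi^\ast\gamma)^{-2q}$), one finds for the $(0,1)$-form $f$ that $\|f\|_{L^{2,k}}^2\sim\int_{\C^2}|u|^2\,\pi^\ast\gamma^{2k+2}\,dV_{\C^2}$; in particular $\|f\|_{L^{2,-1}}^2$ becomes the \emph{unweighted} $L^2$-norm of $u$. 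Likewise $\|\dq f\|^2\sim\int_{\C^2}|\partial_{\bar v}u_2-\partial_{\bar w}u_1|^2\,dV_{\C^2}$, and the leading part of $\|\dq^\ast f\|^2$ is the unweighted $L^2$-norm of the divergence $\partial_v u_1+\partial_w u_2$ (the weighted adjoint $\vartheta$ of $\dq$ equals $-(\pi^\ast\gamma^{-2})(\partial_v u_1+\partial_w u_2)$ plus a zeroth-order term carrying the coefficient $\pi^\ast\gamma^{-4}$). Finally $\|f\|_{W^1(X)}^2\sim\int_{\C^2}\sum_{j}\sum_{|\alpha|\le1}|\partial^\alpha u_j|^2\,\pi^\ast\gamma^4\,dV_{\C^2}$. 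Thus the theorem is equivalent to controlling the $\pi^\ast\gamma^4$-weighted full gradient of $u$ by the unweighted $L^2$-norms of $\dq u$, of the divergence of $u$, and of $u$ itself.

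Next I would establish the model estimate on a fixed annulus in $\C^2$ with the Euclidean metric. The interior Morrey–Kohn–H\"ormander identity (with no boundary term, since we localize with cut-offs) shows that $\|\dq u\|^2+\|\vartheta_0 u\|^2$ dominates $\sum_{j,k}\|\partial_{\bar z_k}u_j\|^2$, i.e.\ \emph{all} antiholomorphic derivatives of the coefficients, where $\vartheta_0$ denotes the flat adjoint. The holomorphic derivatives are then recovered from the elementary identity $\|\partial_{z}\phi\|=\|\partial_{\bar z}\phi\|$, valid for compactly supported $\phi$: applying it to $\chi u$ with a cut-off $\chi$ converts control of the $\bar\partial$-derivatives into control of the full gradient, at the cost of lower-order terms supported where $\nabla\chi\ne0$. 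This yields the scale-invariant interior estimate $\|\nabla u\|_{A}^2\lesssim\|\dq u\|_{A'}^2+\|\vartheta_0 u\|_{A'}^2+\|u\|_{A'}^2$ on annuli $A\subset\subset A'$, with a constant independent of the annulus.

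The passage through the singularity I would handle by a dyadic scaling argument. Decompose a neighborhood of $0$ into annuli $A_\rho=\{\rho<|\zeta|<2\rho\}$, $\rho=2^{-j}$, on each of which $\pi^\ast\gamma^4\sim\rho^4$ is essentially constant; after the rescaling $\zeta=\rho\eta$ the weight becomes harmless, the operator $\vartheta$ turns into a uniformly bounded perturbation of $\vartheta_0$ on the unit annulus, and the model estimate applies with a uniform constant. Tracking the powers of $\rho$ produced by $dV_{\C^2}$, by the weight, and by the rescaling of derivatives, the left-hand (gradient) contribution over $A_\rho$ equals $\rho^6$ times its unit-annulus counterpart, while the first-order right-hand contributions equal $\rho^2$ times and the zeroth-order one $\rho^4$ times theirs. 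Because $\gamma\lesssim1$, hence $\rho\le1$, on the support, one has $\rho^6\le\rho^2$ and $\rho^6\le\rho^4$, so each annulus satisfies its own instance of the desired inequality with room to spare; summing over the dyadic scales (whose enlarged annuli $A'_\rho$ have bounded overlap) reproduces the global weighted norms and gives the theorem.

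The \emph{main obstacle} is precisely that the induced metric degenerates at the singular point and that the relevant weight $\pi^\ast\gamma^4$, being a negative power of $|\zeta|$ in the exponent, carries the wrong sign of curvature; consequently no Bochner/H\"ormander positivity is available and the entire gain of regularity must be extracted from the homogeneity of the weight through the scaling. The delicate points are therefore the exact matching of the powers of $\rho$ across scales — which is what makes the dyadic series converge and simultaneously forces the favorable combination of exponents — and the justification of the integrations by parts near $0$, i.e.\ verifying that membership of $f$ in $\mathrm{dom}(\dq)\cap\mathrm{dom}(\dq^\ast)$ transfers to enough integrability and decay of $u$ that no boundary contribution appears from the small spheres $\{|\zeta|=\varepsilon\}$ and the summed estimate indeed equals the claimed weighted norms.
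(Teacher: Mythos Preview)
Your approach is sound and leads to the theorem, but it is genuinely different from the paper's. The paper never localizes dyadically. Instead it works globally on $(B,g)$ with an orthonormal frame $\omega_1,\omega_2$ and dual fields $L_1,L_2$, and performs a direct Morrey--Kohn--H\"ormander integration by parts, tracking all errors from the singular coefficients via a $\xi_k$-calculus (functions with $|\gamma^{|\alpha|}D^{\alpha}\xi_k|\lesssim\gamma^k$). This yields $\|\overline{L}f\|^2\lesssim\|\dq f\|^2+\|\dq^{*}f\|^2+\|\xi_{-2}f\|^2$; the commutator $[L_j,\overline{L}_k]=\xi_{-2}L+\xi_{-2}\overline{L}$ then gives $\|Lf\|\lesssim\|\overline{L}f\|+\|\xi_{-2}f\|$. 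A Friedrichs-type approximation removes the $C^1$ hypothesis, and finally the combined inequality is applied to $\gamma f$ (since $f\in L^{2,-1}$ implies $\gamma f\in L^{2,-2}$), using that Euclidean derivatives are bounded combinations of $\gamma L_j$ and $\gamma\overline{L}_j$.

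The paper's route is more direct and in fact proves the sharper intermediate estimate with $\|f\|_{L^{2,-2}}^2$ on the right before the substitution $f\mapsto\gamma f$. Your scaling argument is more conceptual (homogeneity of $\gamma$ does all the work) and would generalize more easily, but two points in your write-up deserve tightening. First, your description of $\dq^{*}$ is slightly loose: its principal part is $g^{ij}\partial_j u_i$ with a genuine matrix $g^{ij}\sim\gamma^{-2}$, not simply $\gamma^{-2}(\partial_v u_1+\partial_w u_2)$; what is true, and what you need, is that after rescaling to the unit annulus this becomes a uniformly elliptic variable-coefficient first-order operator, so the interior model estimate still holds with a $\rho$-independent constant. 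Second, the paper sidesteps your ``no contribution from small spheres'' worry by proving the inequality for smooth forms and then passing to the limit via Friedrichs' lemma; in your scheme you should note explicitly that the annular estimate holds for every partial sum over $\rho\ge 2^{-N}$ and then let $N\to\infty$ by monotone convergence on the left, which uses only finiteness of the right-hand side and avoids any boundary-term argument.
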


Theorem \ref{weightestI} is a special case of our more general estimate \eqref{LL}.
 Note that particularly bounded forms are in $L^{2,-2}(X)$ as $\|z\|^{-2}$
is integrable on $X$, i.e. $\|f\|_{L^{2,-2}} \lesssim
\|f\|_{L^\infty}$. Theorem \ref{weightestI} shows that we can have
full regularity for forms with a certain extra-vanishing at the
singularity. This makes sense as it is known that there exist
usually finitely many obstructions to solving the $\dq$-equation
in the $L^2$-category at isolated singularities and that the
number of obstructions to solving $\dq u=f$ is decreasing if $f$
is in $L^p$ for increasing $p$ (see \cite{Rp7}, Theorem 1.1 and
Theorem 1.2).

In this spirit, we derive a type of subelliptic estimates for the
$\dq$-problem which can be viewed as a trade-off of a decrease in
derivatives for less stringent
 hypotheses on to which $L^p$ spaces a given form may belong.
 Our results show that the restriction of
  a form $f$ from more $L^p$ spaces for $p>2$
  allows for higher orders of derivatives of $f$
 to be estimated.

Then our second main result reads as:

\begin{thrm}
 Let $0\le \epsilon\le 1/2$, $p>4/(2-\epsilon)$,
  and $f\in L^{p}_{(0,1)}(X)\cap
 \mbox{dom}(\mdbar)\cap \mbox{dom}(\mdbar^{\ast})$.  Then the
 following
 subelliptic estimate holds
\begin{equation*}
 \| f\|_{W^\epsilon(X)}
  \lesssim
\|  \mdbar f\|+ \| \mdbar^{\ast} f\|
 +\| f\|_{L^{p}(X)}
  .
\end{equation*}
\end{thrm}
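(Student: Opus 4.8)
The plan is to localize the estimate with a partition of unity, handle the singular point by the weighted subelliptic estimate \eqref{LL} (of which Theorem \ref{weightestI} is a special case) together with a Hölder inequality, and dispose of the boundary and interior by the classical $\dq$-Neumann theory. Fix a cut-off $\chi$ equal to $1$ near the singular point and supported in a small neighborhood of it, and split $f=\chi f+(1-\chi)f$. On $\operatorname{supp}(1-\chi)$ the space $X$ is a smooth strictly pseudoconvex domain, so the $\dq$-Neumann problem is subelliptic with gain exactly $1/2$, giving $\|(1-\chi)f\|_{W^{1/2}}\lesssim\|\dq f\|+\|\dq^\ast f\|+\|f\|$; since $\epsilon\le 1/2$, since $X$ has finite volume, and since $\|f\|\lesssim\|f\|_{L^p}$ for $p\ge2$, this piece is dominated by the right-hand side. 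This is precisely where the hypothesis $\epsilon\le1/2$ is forced: however much regularity is available at the singularity, the strictly pseudoconvex boundary only yields a gain of $1/2$.

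For the singular piece I would apply \eqref{LL} at level $\epsilon$ (equivalently, interpolate Theorem \ref{weightestI} against the trivial bound $\|g\|_{W^0}\le\|g\|_{L^{2,0}}$), which gives
\[
\|\chi f\|_{W^\epsilon}^2\lesssim\|\dq(\chi f)\|^2+\|\dq^\ast(\chi f)\|^2+\|\chi f\|^2_{L^{2,-2\epsilon}},
\]
the commutators $[\dq,\chi]$ and $[\dq^\ast,\chi]$ contributing only zeroth-order terms supported off the singularity, which are absorbed as above. It remains to bound the weighted norm $\|\chi f\|^2_{L^{2,-2\epsilon}}=\int_X\gamma^{-4\epsilon}|\chi f|^2\,dV_X$ by $\|f\|_{L^p}^2$, and for this I would use Hölder with exponents $p/2$ and $p/(p-2)$:
\[
\int_X\gamma^{-4\epsilon}|\chi f|^2\,dV_X\le\|f\|_{L^p(X)}^2\Big(\int_{\operatorname{supp}\chi}\gamma^{-4\epsilon\,\frac{p}{p-2}}\,dV_X\Big)^{\frac{p-2}{p}}.
\]

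The admissible range of $p$ is then dictated by the integrability of the weight, and this is exactly where the covering $\pi$ is used. Since $\pi^\ast\gamma^2\sim|v|^2+|w|^2=:r^2$ and $\pi^\ast dV_X\sim r^4\,dV_{\C^2}$, one has $\int_X\gamma^{2b}\,dV_X\sim\int_0 r^{2b+7}\,dr$ near the singularity, which converges if and only if $b>-4$. Taking $b=-2\epsilon\,p/(p-2)$, the condition $b>-4$ reads $\epsilon p<2(p-2)$, i.e. $p>4/(2-\epsilon)$, which is exactly the stated hypothesis; hence the last integral is finite. Combining the singular, boundary and interior pieces through the partition of unity yields $\|f\|_{W^\epsilon}\lesssim\|\dq f\|+\|\dq^\ast f\|+\|f\|_{L^p}$.

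The main obstacle I expect is not the Hölder computation but the patching. One must run \eqref{LL} for the compactly supported $\chi f$ on the singular space while simultaneously passing $(1-\chi)f$ through the boundary region, and verify that every error term generated by the cut-offs and by the boundary subellipticity collapses into the single lower-order term $\|f\|_{L^p}$; one must also check that the interpolation of this inhomogeneous a priori estimate is legitimate in the present weighted, branched setting (or else invoke \eqref{LL} directly and avoid interpolation altogether). The clean gluing across the singular locus, where $W^\epsilon$ is defined through the branched cover, is the delicate point.
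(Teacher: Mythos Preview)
Your overall architecture---partition of unity, classical $\tfrac12$-subellipticity near the boundary, a weighted estimate near the singularity with the weight controlled by $L^p$ via H\"older---matches the paper exactly, and your H\"older computation $\int_X\gamma^{2b}\,dV_X\sim\int_0 r^{2b+7}\,dr$ is correct. The substantive difference is in how the $W^\epsilon$ estimate near the singularity is actually obtained.

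The paper does \emph{not} interpolate. Instead it proves directly (Proposition~\ref{subprop}) that for compactly supported $f$,
\[
\|f\|_{W^\epsilon(X)}\lesssim \|f\|_{W^{1,1-\epsilon}(X)}+\|f\|_{L^p(X)},\qquad p>\tfrac{4}{2-\epsilon},
\]
by a Fourier-cutoff argument: one writes $\|f\|_{W^\epsilon}\le \|f\|_{W^\epsilon(X\setminus\triangle_\alpha)}+\|\varphi_\alpha f\|_{W^\epsilon}$, splits $\Lambda^\epsilon(\varphi_\alpha f)$, and shows $\Lambda^\epsilon\varphi_\alpha\in L^q(\R^4)$ for $q<4/\epsilon$, so that $\|f\,\Lambda^\epsilon\varphi_\alpha\|_{L^2(X)}\lesssim\|f\|_{L^p(X)}$. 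The threshold $p>4/(2-\epsilon)$ thus comes from this Fourier analysis, not from a weighted H\"older inequality. Then $\|f\|_{W^{1,1-\epsilon}}$ is bounded by applying Theorem~\ref{w1} (equivalently \eqref{LL}) to $\gamma^{1-\epsilon}f$, which only needs $f\in L^{2,-\epsilon}$.

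Your route has two issues. First, a weight mismatch: interpolating Theorem~\ref{weightestI} (right-hand side $\|f\|_{L^{2,-1}}$) against the trivial bound yields $\|f\|_{L^{2,-\epsilon}}$, not $\|f\|_{L^{2,-2\epsilon}}$; the $-2\epsilon$ you wrote would come instead from the cruder consequence of \eqref{LL} that $\|f\|_{W^1}\lesssim\ldots+\|f\|_{L^{2,-2}}$, so your ``equivalently'' is not equivalent. Second, and more serious, is the point you yourself flag: the interpolation of an inhomogeneous a~priori estimate of the form $\|f\|_{W^1}\lesssim\|\dq f\|+\|\dq^\ast f\|+\|f\|_{L^{2,-k}}$ against $\|f\|_{W^0}\le\|f\|_{L^{2,0}}$ amounts to interpolating the identity between graph spaces $\{f:\dq f,\dq^\ast f\in L^2\}\cap L^{2,-k}$ and $L^{2,0}$, and identifying the resulting interpolation space is nontrivial in this weighted, branched setting. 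The paper avoids this entirely by the direct argument above; your ``or else invoke \eqref{LL} directly'' does not by itself produce a $W^\epsilon$ bound---one still needs a mechanism (the paper's Fourier cutoff, or a justified interpolation) to pass from the full $W^1$/$\|Lf\|$ control to $W^\epsilon$ under the weaker hypothesis.
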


Here, the restriction to $\epsilon\leq 1/2$ is only due to the
fact that $X$ has a strongly pseudoconvex boundary. For forms with
compact support in $X$ the statement is valid for all $0\leq
\epsilon\leq 1$, by Theorem \ref{weightestI}.

\section{Preliminary calculations}
\label{prelim}
  We note that the variety $X$ can be parametrized by the two sheeted covering
\begin{equation*}
\pi: \C^2\rightarrow X,\ (v,w) \mapsto (v^2,w^2,vw).
\end{equation*}
Instead of working directly on $X$, we consider instead the
strongly pseudoconvex domain $B=\{(v,w):|v|^4+|w|^4<1\}$ in $\C^2$
with the metric
\begin{equation}
\label{metric}
 (g_{ij})=\left(
       \begin{array}{cc}
        4|v|^2+|w|^2 &  v\overline{w}  \\
          \overline{v}w & |v|^2+4|w|^2
       \end{array}
     \right),
\end{equation}
so that
the volume element is given by
\begin{equation}
 \label{volume}
dV_g = \det (g_{ij}) dv\wedge dw \wedge d\bar{v}\wedge d\bar{w}.
\end{equation}
Then we have simply
\begin{equation*}
\|f\|^2_{L^{2,k}} = \frac{1}{2} \int_B (\pi^* \gamma)^{2k} |\pi^*
f|^2 dV_g
\end{equation*}
for the weighted $L^2$-norms on $X$. So, for the questions that we
study in the present paper, it is absolutely sufficient to replace
the original variety $X$ in $\C^3$ by the smooth domain $B$ in
$\C^2$ with the positive semi-definite pseudometric $g=(g_{ij})$.

Thus, from now on, let $X$ be the Hermitian space $(B,g)$. It is
our goal to study subelliptic estimates for the $\mdbar$-problem
on
the Hermitian complex space $X$.\\

 We can calculate $\mdbar$ and $\mdbar^{\ast}$
 in the holomorphic coordinates $v$ and $w$ (see \cite{Mue}).  For
 $f\in L^2(X)$ we have
\begin{equation*}
 \mdbar f
  =\frac{\partial f}{\partial \bar{v}} d\bar{v}
   +\frac{\partial f}{\partial \bar{w}} d\bar{w},
 \end{equation*}
and in the case $f=f_1d\bar{v}+f_2 d\bar{w}$, $f\in
L^2_{(0,1)}(X)$, we have
\begin{equation*}
 \mdbar f =
  \left(\frac{\partial f_2}{\partial \bar{v}}
   -\frac{\partial f_1}{\partial \bar{w}}\right) d\bar{v}\wedge d\bar{w}.
\end{equation*}

To describe the operator $\mdbar^{\ast}$ we first define
\begin{equation*}
|g|=\det(g_{ij})
 = 16|v|^2|w|^2 +4|v|^4 +4 |w|^4.
\end{equation*}
 Thus we can write
\begin{equation*}
(g^{ij})=\frac{1}{|g|}
 \left(
       \begin{array}{cc}
        |v|^2+4|w|^2 &  -v\overline{w}  \\
          -\overline{v}w & 4|v|^2+|w|^2
       \end{array}
     \right).
\end{equation*}

For the operator $\mdbar^{\ast}$ acting on a $(0,1)$ form,
$f=f_1d\bar{v}+f_2d\bar{w}$ we can write
\begin{equation*}
\label{star1}
 \mdbar^{\ast} f
 = \frac{1}{|g|}
  \left( \frac{\partial}{\partial v} |g| g^{11}f_1
   +\frac{\partial}{\partial w} |g| g^{12}f_1 +
   \frac{\partial}{\partial v} |g| g^{21}f_2
   + \frac{\partial}{\partial w} |g| g^{22}f_2\right).
\end{equation*}

  From the discussion above, we immediately see that
 singularities occur in the operators under study at
  $v=w=0$.  We make our
  calculations with respect to an orthonormal
  frame of $(1,0)$ and $(0,1)$ forms and
  we keep track of the singularities arising at
  $v=w=0$ in the use of such forms.
    As we shall see, we
  need to consider fields with coefficients with singular
  behavior near the singularity of $X$.  We keep
  track of the blow up near the singularity in terms of the factor
 \begin{equation*}
 \gamma = \sqrt{|v|^2+|w|^2}.
 \end{equation*}

Note that $\gamma^2$ behaves like the distance to the origin in $X$ (with respect to the metric $g$),
because $\gamma^2=|z_1|+|z_2|$ when we consider $\gamma$ in the $z$-coordinates on the original variety in $\C^3$
carrying the restriction of the Euclidean metric.\\

  A
 simple calculation shows an
orthonormal system of $(1,0)$ forms will consist of forms written
as
\begin{equation*}
\alpha_1(v,w) dv + \alpha_2(v,w) dw,
\end{equation*}
where
 \begin{equation*}
 |\alpha_i| \sim \gamma^{} \qquad i=1,2
\end{equation*}
 and hence the dual frame consists of vectors which can be
written as
\begin{equation}
 \label{vfform}
\beta_1(v,w)\frac{\partial}{\partial v}+
\beta_2(v,w)\frac{\partial}{\partial w},
\end{equation}
where
\begin{equation*}
|\beta_i| \sim \frac{1}{\gamma^{}} \qquad i=1,2.
\end{equation*}
We follow the notation in \cite{EhLi10} to keep track of the
singularity near the origin.
  We write $\xi_k$
 to denote an operator which on the level of functions
 is the multiplication
   by a function $\xi_k(\zeta)$
  with the
 property
\begin{equation}
 \label{xiprop}
 |\gamma^{\alpha}D^{\alpha}_{\zeta}
 \xi_k(\zeta)|\lesssim \gamma^k,
\end{equation}
where $D^{\alpha}_{\zeta}$ is a differential operator, using the
index notation, of order $|\alpha|$: for a multi-index
$\alpha=(\alpha_1,\alpha_2)$, where each $\alpha_j$ is an integer,
we use the notation
\begin{align*}
&D^{\alpha}=\frac{\partial^{\alpha_1} }{\partial
\zeta_1^{\alpha_1}}
 \frac{\partial^{\alpha_2} }{\partial
\zeta_2^{\alpha_2}}\\
&|\alpha|=\alpha_1+\alpha_2 .
\end{align*}
On the level of forms, $\xi_k f$ is a sum of terms
  which are the product of coefficients of $f$
  with forms whose coefficients are of type $\xi_k$ on the level
  of functions.
Thus, for instance with $f=f_1\omega_1 + f_2\omega_2$,
 $\xi_k f$ could be used to denote a $(0,1)$-form such as
  $ \xi_k^1 f_1 \omega_1 + \xi_k^2f \omega_2$, where
 $\xi_k^1$ and $\xi_k^2$ satisfy estimates as in (\ref{xiprop}),
 or $\xi_k f$ could be used to denote a function
  $\xi_k^1 f_1 + \xi_k^2f_2$.

We choose an orthonormal frame $\omega_1$, $\omega_2$ of $(1,0)$
vectors as above, and denote by $L_1$ and $L_2$ the dual frame.
 In the next section we will see the following two calculations
 come into play.  From the discussion above we
  compute
 \begin{align}
 \label{domeg}
&\mdbar \omega^i = \sum_{j,k} \xi_{-2}
 \overline{\omega}^j\wedge\omega^k\\
  \label{comm}
& [ L_j,\overline{L}_k ]
  = \sum_i \xi_{-2} L_i +
   \sum_i \xi_{-2} \overline{L}_i.
\end{align}

 \section{Integration by parts}
  We take as our guide the situation on a smoothly bounded strictly
  pseudoconvex domain, $\Omega\subset\subset\mathbb{C}^2$.
  In such a setting, for $f=f_1 d\bar{z}_1 + f_2d\bar{z}_2$, where
  $f_i \in C^1(\overline{\Omega})$, we have the
  Morrey-Kohn-H\"{o}rmander formula (see \cite{CS}):
\begin{equation}
\label{mkh}
 \|\mdbar f\|^2 + \|\mdbar^{\ast} f \|^2
 = \sum_{i,j=1}^2\int_{\Omega}
  \left| \frac{\partial f_i}{\partial \bar{z}_j} \right|^2 dV
 +\sum_{i,j=1}^2\int_{\partial\Omega}
  \frac{\partial^2 \rho}{\partial z_i \partial \bar{z}_j}
   f_j \overline{f}_j dS,
\end{equation}
where $dV$ refers to the volume element,  $dS$ the surface area
element, and $\rho$ is a defining function for the domain
$\Omega$.  An important fact to be used later is that strict
pseudoconvexity implies the Levi form associated with the boundary
is strictly positive definite and thus allows the last
 term on the right to be bounded from below.
 The relation in (\ref{mkh}) is obtained by an integration by parts in
the norms $\|\mdbar f\|^2$ and $\|\mdbar^{\ast} f\|^2$. We shall
see below that an integration by parts leads to weighted norms,
the weights of which blow up at our singularity
  due to
the singular nature of the operators.
We consider the norms $ \|\mdbar f\|^2$ and $\|\mdbar^{\ast} f
\|^2$ in terms of the vector fields $L_1$, $L_2$,
$\overline{L}_1$, and $\overline{L}_2$.

We write a $(0,1)$-form, $f$, as $f=f_1\overline{\omega}_1 + f_2
 \overline{\omega}_2$.
  For $f$ a  $(0,1)$-form which is $C^1(\mathbb{C}^2)$
   (in the sense the derivatives with respect to coordinates
    given by $v$ and $w$ in Section \ref{prelim} are
     continuous) with compact
support in a neighborhood of the singularity, we can write, using
 (\ref{domeg}),
\begin{equation}
 \label{bar}
\mdbar f = \left(\overline{L}_1 f_2 -
 \overline{L}_2 f_1\right) \overline{\omega}_1\wedge
 \overline{\omega}_2 + \xi_{-2} f,
\end{equation}
and similarly, using integration by parts and the
 fact that the vector fields, $L_1$ and $L_2$ have singular
 coefficients as in (\ref{vfform}), we have
\begin{equation}
\label{star}
 \mdbar^{\ast} f
 = L_1 f_1
  +
L_2 f_2
  + \xi_{-2} f .
\end{equation}

We define the norm
\begin{equation*}
 \|\overline{L} f\|^2
  =\sum_{i,j=1}^2
 \|\overline{L}_j f_i\|^2.
\end{equation*}
Likewise, the norm
\begin{equation*}
 \|L f\|^2
  =\sum_{i,j=1}^2
 \|L_j f_i\|^2
\end{equation*}
will be used.  Through an integration by parts and the commutator
relation (\ref{comm}) above the two norms are related:
\begin{align}
\nonumber
 (L_jf_i, L_jf_i)=&
 (f_i, \overline{L}_j L_j f_i) +
 O\left(\|\xi_{-2}f\| \|L_j f_i\|\right)\\
 \nonumber
 =& (f_i,  L_j\overline{L}_j f_i) +
 O\left(\|\xi_{-2}f\|\big( \|L f\|+\|\overline{L} f\|\big)
 \right)\\
 \label{lparts}
 =& (\overline{L}_jf_i, \overline{L}_j f_i) +
 O\left(\|\xi_{-2}f\|\big( \|L f\|+\|\overline{L} f\|\big)
 \right).
\end{align}
Summing over $i,j$ and using the notation "(s.c.)" for an
arbitrarily small constant, we have
\begin{equation*}
 \|L f \| \lesssim \| \overline{L} f\| + \|\xi_{-2}
f_j\| +(s.c.)\left(\|L f\|+\|\overline{L} f\|\right)
\end{equation*}
which implies
\begin{equation}
\label{relatel}
 \|L f \| \lesssim \| \overline{L} f\| + \|\xi_{-2}
f_j\|.
\end{equation}

 We insert (\ref{bar}) and (\ref{star}) into the
 calculation of the norms $\|\mdbar f\|^2 + \|\mdbar^{\ast} f
 \|^2$ and follow \cite{CS} to write
\begin{equation}
 \label{cseq}
\|\mdbar f\|^2 + \|\mdbar^{\ast} f
 \|^2 = \|\overline{L} f\|^2 -
  \sum_{j,k} \left( \overline{L}_k f_j, \overline{L}_j f_k\right)
  +\sum_{j,k} \left( L_j f_j, L_k f_k\right) +error,
 \end{equation}
where here and below "error" will refer to terms which can be
estimated by
\begin{equation*}
\|\mdbar f\|^2 +
 \|\mdbar^{\ast} f\|^2
  +(s.c)\left(\|\overline{L} f\|^2
 +\|L f\|^2\right)
  + \|\xi_{-2}f\|^2.
\end{equation*}
"(s.c.)" is the notation for an arbitrarily small constant.

The middle terms on the right hand side of (\ref{cseq}) can be
related
 as in (\ref{lparts}):
\begin{equation*}
 (L_jf_j, L_kf_k)=
 (\overline{L}_k f_j, \overline{L}_j f_k)
 +error.
\end{equation*}

Combining we get
\begin{equation*}
\| \mdbar f\|^2+\| \mdbar^{\ast}f\|^2
 =
 \|\overline{L} f\|^2
+error.
\end{equation*}

Thus we obtain the analogue of the Morrey-Kohn-H\"{o}rmander
estimate applied to $C^1$ $(0,1)$ forms with compact support in a
neighborhood of the singular point:
\begin{align*}
  \|   \mdbar f\|^2 + \|
   \mdbar^{\ast} f\|^2
 \gtrsim&
  \| \overline{L} f \|^2
   - (\|\mdbar f\|^2 + \|\mdbar^{\ast} f\|^2 +\|\xi_{-2}f\|^2)
  - (s.c)\left(\|\overline{L}f\|^2+\|Lf\|^2\right)\\
 \gtrsim&
  \| \overline{L} f \|^2
   - (\|\mdbar f\|^2 + \|\mdbar^{\ast} f\|^2 +\|\xi_{-2}f\|^2)
  - (s.c)\|\overline{L}f\|^2\\
  \gtrsim&  \| \overline{L} f \|^2
   - (\|\mdbar f\|^2 + \|\mdbar^{\ast} f\|^2 +\|\xi_{-2}f\|^2),
\end{align*}
 where (\ref{relatel}) is used in the second step.
Hence we have the estimate for $\|\overline{L}f\|$:
\begin{equation}
 \label{weightnorm}
 \| \overline{L} f \|^2
 \lesssim \| \mdbar f\|^2 + \| \mdbar^{\ast} f\|^2
  +\|\xi_{-2}f\|^2.
\end{equation}

 So far the analysis which led to the estimate in
 (\ref{weightnorm}) was done with forms with compact support
 in a neighborhood of the singularity.  To
 globalize such estimates
we can take a partition of unity and consider
 separately those forms with support in a neighborhood of the
 singularity and those whose support intersects the boundary.
 For forms which are supported in a neighborhood intersecting the
 boundary
 we rely on the condition of strict
 pseudoconvexity to handle boundary terms.  In the case of
 a form $f$ supported in a neighborhood, $U$, of a boundary point
 of $\partial X$, the estimates can be read directly from the
 classical estimates of strictly pseudoconvex domains in complex
 manifolds \cite{CS}:
for
 $f\in \mbox{dom}(\mdbar)\cap \mbox{dom}(\mdbar^{\ast})\cap
C^2_{(0,1)}(U)$, we have
\begin{equation*}
 \sum_{i,j=1}^2\int_{U\cap\partial X}
  \rho_{jk}
   f_j \overline{f}_k dS+
 \| \overline{L} f \|^2
 \lesssim \| \mdbar f\|^2 + \| \mdbar^{\ast} f\|^2.
\end{equation*}

Using
\begin{equation*}
\sum_{i,j=1}^2\int_{U\cap\partial X}
  \rho_{jk}
   f_j \overline{f}_k dS
     >\|f\|_{L^2(\partial X)}^2
\end{equation*}
and a density argument we have the estimate for
 forms in $\mbox{dom}(\mdbar)\cap\mbox{dom}(\mdbar^{\ast})$
  supported in neighborhood of boundary point:
\begin{equation}
\label{nearbndry}
 \| \overline{L} f \|^2
 \le \| \mdbar f\|^2 + \| \mdbar^{\ast} f\|^2.
\end{equation}

Using a partition of unity and the
 corresponding contributions from (\ref{weightnorm})
 and (\ref{nearbndry})
 leads to the
\begin{prop}
 \label{c1est}
Let $f\in \mbox{dom}(\mdbar)\cap \mbox{dom}(\mdbar^{\ast})\cap
C^1_{(0,1)}(X)$.  Then we have the estimate
\begin{equation*}
\| \overline{L} f \|^2
 \le \| \mdbar f\|^2 + \| \mdbar^{\ast} f\|^2
  +\|\xi_{-2}f\|^2
 .
\end{equation*}
\end{prop}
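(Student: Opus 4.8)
The plan is to globalize the two local estimates already in hand — the interior estimate (\ref{weightnorm}), valid for $C^1$ forms with compact support near the singularity (and, by the identical computation, for any $C^1$ form compactly supported in the interior of $B$), and the boundary estimate (\ref{nearbndry}), valid for forms in $\mbox{dom}(\mdbar)\cap\mbox{dom}(\mdbar^{\ast})$ supported near a boundary point — by means of a partition of unity. Concretely, I would fix finitely many real-valued cutoffs $\phi_0,\phi_1,\dots,\phi_m\in C^\infty(\overline{B})$ with $\sum_j\phi_j^2\equiv 1$, where $\phi_0$ is supported in a compact subset of $B$ and is identically $1$ on a neighborhood of the origin, while $\phi_1,\dots,\phi_m$ are each supported in a coordinate neighborhood of a boundary point, together covering $\partial X$. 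The crucial feature of this choice is that $\phi_0$ is \emph{locally constant near the singularity}, so that every derivative of every $\phi_j$ is supported in a region where $\gamma$ is bounded away from $0$.

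Next I would localize the left-hand side. Since $\sum_j\phi_j^2\equiv 1$ we have $\|\overline{L}f\|^2=\sum_j\|\phi_j\,\overline{L}f\|^2$, and writing $\phi_j\overline{L}_k f_i=\overline{L}_k(\phi_j f_i)-(\overline{L}_k\phi_j)f_i$ gives, after the triangle inequality,
\[
\|\overline{L}f\|^2\ \lesssim\ \sum_j\|\overline{L}(\phi_j f)\|^2+\sum_j\big\|(\overline{L}\phi_j)\,f\big\|^2 .
\]
To $\|\overline{L}(\phi_0 f)\|^2$ I would apply (\ref{weightnorm}), noting that $\phi_0 f$ is $C^1$ with compact support in $B$, and to each $\|\overline{L}(\phi_j f)\|^2$ with $j\ge 1$ the boundary estimate (\ref{nearbndry}); here one must check that multiplication by the real smooth cutoff $\phi_j$ preserves membership in $\mbox{dom}(\mdbar^{\ast})$, which holds because $\phi_j$ merely rescales the components of $f$ and hence leaves intact the boundary condition defining $\mbox{dom}(\mdbar^{\ast})$. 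Expanding $\mdbar(\phi_j f)=\phi_j\,\mdbar f+(\overline{L}\phi_j)f$ and the analogous Leibniz rule $\mdbar^{\ast}(\phi_j f)=\phi_j\,\mdbar^{\ast}f+(\nabla\phi_j)f$ (the omitted factor being a first derivative of $\phi_j$ times $f$, with no derivative falling on $f$), and using $\sum_j\|\phi_j\,\mdbar f\|^2=\|\mdbar f\|^2$ and likewise for $\mdbar^{\ast}$, I obtain
\[
\|\overline{L}f\|^2\ \lesssim\ \|\mdbar f\|^2+\|\mdbar^{\ast}f\|^2+\|\xi_{-2}(\phi_0 f)\|^2+\sum_j\big\|(\nabla\phi_j)\,f\big\|^2 ,
\]
where $(\nabla\phi_j)f$ stands schematically for all the commutator contributions.

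It remains to absorb the error terms into the right-hand side, which is where the choice of cutoffs pays off. Every commutator term $(\nabla\phi_j)f$ is supported where $\nabla\phi_j\neq 0$, hence — by the locally constant choice of $\phi_0$ and the fact that $\phi_1,\dots,\phi_m$ are supported near the boundary — in the region $\{\gamma\gtrsim 1\}$ away from the origin, where all the singular frame coefficients entering $\overline{L}\phi_j$ and $\mdbar\phi_j$ are bounded. Thus each such term is $\lesssim\|f\|^2$. Finally, since $\gamma$ is bounded above on $B$ (indeed $\gamma<\sqrt{2}$ there), we have $1\lesssim\gamma^{-4}$ pointwise, whence $\|f\|^2\lesssim\|\xi_{-2}f\|^2$; together with $\|\xi_{-2}(\phi_0 f)\|^2\le\|\xi_{-2}f\|^2$ this shows that all error terms are dominated by $\|\xi_{-2}f\|^2$, and collecting the estimates yields the asserted bound.

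The heavy analytic work having already been done in deriving (\ref{weightnorm}) and (\ref{nearbndry}), the only genuine subtlety is the treatment of the localization errors \emph{at the singularity}: a priori, commuting the singular frame $\overline{L}$ (whose coefficients are of type $\gamma^{-1}$) past a cutoff could produce uncontrolled blow-up. The device of taking $\phi_0\equiv1$ near the origin confines all derivatives of the cutoffs to $\{\gamma\gtrsim 1\}$ and thereby removes this danger entirely; this, rather than any new estimate, is the step I would be most careful about.
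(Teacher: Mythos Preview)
Your proposal is correct and follows precisely the route the paper indicates: combine the interior estimate (\ref{weightnorm}) and the boundary estimate (\ref{nearbndry}) via a partition of unity, with the key device that the cutoff near the singularity is taken constant there so that all commutator errors live where $\gamma\gtrsim 1$ and are absorbed by $\|\xi_{-2}f\|^2$. You have in fact supplied more detail than the paper, which simply asserts that a partition of unity argument yields the proposition.
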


Note that
$$\| \xi_{-2} f\|^2 \sim \|f\|^2_{L^{2,-2}}.$$

\section{Approximation by smooth forms}

The goal of this section is to relax the condition of Proposition
\ref{c1est} so as to apply the proposition to all $f\in
L^{2,-2}_{(0,1)}(X)\cap\mbox{dom}(\mdbar)\cap
 \mbox{dom}(\mdbar^{\ast})$.
Using a partition of unity we will assume in this section that
 $f$ has support near the origin (approximation of
  forms supported near the
 strictly pseudoconvex boundary may be handled as in the classical
 theory).
We thus let $U\subset \mathbb{C}^2$ be a neighborhood of the
origin containing the support of $f$ and
 $x=(x_1,\ldots,x_4)$ the real coordinates in $U$.  We take $\chi(x)\in
 C^{\infty}_0(U)$ as an approximation of the identity:
\begin{equation*}
\int_U \chi(x) d{\bf x}^4 =1,
\end{equation*}
where $d{\bf x}^4=dx_1\cdots dx_4$. Furthermore,
\begin{equation*}
\chi_{\varepsilon} (x) = \frac{1}{\varepsilon^4}
 \chi \left(\frac{x}{\varepsilon} \right).
\end{equation*}

Our immediate goal for a given
 $f\in
L^{2,-2}_{(0,1)}(X)\cap\mbox{dom}(\mdbar)\cap
\mbox{dom}(\mdbar^{\ast})$ is to find a sequence of $C^1$ forms to
which Proposition \ref{c1est} can be applied and so that in the
limit the inequality
\begin{equation*}
\| \overline{L} f \|^2
 \le \| \mdbar f\|^2 + \| \mdbar^{\ast} f\|^2
  +\|\xi_{-2}f\|^2
\end{equation*}
is obtained.

To this end, for
$f=f_1\overline{\omega}_1+f_2\overline{\omega}_2$,
 we define
\begin{align*}
 f_{\varepsilon}&=(f_1\ast \chi_\varepsilon)\overline{\omega}_1
  + (f_2\ast \chi_\varepsilon)\overline{\omega}_2,
 \end{align*}
where convolution is taken with respect to the Euclidean
 volume element on $\mathbb{R}^4$, i.e.
\begin{equation*}
f_j=\int f_j(x-y) \chi_{\varepsilon}(y) d{\bf y}^4
 \qquad j=1,2.
\end{equation*}

Let $D$ be a differential operator of the form
$$D=\beta(u,v) \frac{\partial}{\partial x_i}$$
such that $\gamma^2 \beta$ is Lipschitz continuous.  This is the
case e.g. if $\beta$ is $C^1$-smooth outside the origin and
$|\beta|\lesssim 1/\gamma$ as with components of our vector fields
in
 (\ref{vfform}).

\begin{lemma}
 \label{gdf}
Let $f\in L^{2,-2}(X)$ such that $Df\in L^2(X)$. Then
\begin{eqnarray}\label{eq:1}
Df_\epsilon \rightarrow Df\ \ \ \mbox{ in } \ \ L^2(X).
\end{eqnarray}
\end{lemma}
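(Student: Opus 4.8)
The plan is to remove the weight and the singularity by passing to the flat Euclidean measure, after which the assertion becomes an instance of Friedrichs' commutator lemma for a first-order operator with a Lipschitz coefficient. Throughout I use that, by the discussion of Section \ref{prelim}, $dV_X\sim\gamma^4\,dV$ with $dV=d\mathbf{x}^4$ the Euclidean volume on $\R^4$, and I set $b:=\gamma^2\beta$, which is Lipschitz precisely by the hypothesis on $D$. Since in this section $f$ is taken with support near the origin, I may treat $b$ as a Lipschitz function on $\R^4$ and $f$ as an element of $L^2(\R^4)$, so that $f_\varepsilon=f\ast\chi_\varepsilon$ is well defined for small $\varepsilon$.

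The first step is to reformulate the two relevant norms in the flat measure. Because $|\beta|^2\gamma^4=|b|^2$, for any function $h$ one has
\[
\|Dh\|^2\sim\int_B|\beta|^2\,|\partial_{x_i}h|^2\,\gamma^4\,dV=\int_B|b\,\partial_{x_i}h|^2\,dV,\qquad \|h\|_{L^{2,-2}}^2\sim\int_B|h|^2\,dV.
\]
Hence the hypotheses $f\in L^{2,-2}(X)$ and $Df\in L^2(X)$ say exactly that $f\in L^2(dV)$ and $b\,\partial_{x_i}f\in L^2(dV)$, and the claim \eqref{eq:1} is equivalent to $b\,\partial_{x_i}f_\varepsilon\to b\,\partial_{x_i}f$ in $L^2(dV)$. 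In this way the weight and the singularity at the origin are completely absorbed into the Lipschitz coefficient $b$, and all reference to the pseudometric $g$ disappears.

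Writing $J_\varepsilon h:=h\ast\chi_\varepsilon$ and using that convolution commutes with $\partial_{x_i}$, I would then decompose
\[
b\,\partial_{x_i}(J_\varepsilon f)-b\,\partial_{x_i}f=\big[\,b\,\partial_{x_i}\,,\,J_\varepsilon\,\big]f+\big(J_\varepsilon(b\,\partial_{x_i}f)-b\,\partial_{x_i}f\big).
\]
The second summand tends to $0$ in $L^2(dV)$ by the standard convergence of mollifiers, since $b\,\partial_{x_i}f\in L^2(dV)$ by the previous step. It remains to show that the Friedrichs commutator $[\,b\,\partial_{x_i},J_\varepsilon\,]f$ tends to $0$ in $L^2(dV)$. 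Its kernel is $\int[b(x)-b(y)]\,\partial_{y_i}f(y)\,\chi_\varepsilon(x-y)\,dy$, and moving the derivative off $f$ by an integration by parts, which is legitimate because $f\in L^2$ and $[b(x)-b(y)]\chi_\varepsilon(x-y)$ is Lipschitz in $y$ with bounded $y_i$-derivative (here $\partial_{x_i}b\in L^\infty$), yields
\[
\big[\,b\,\partial_{x_i},J_\varepsilon\,\big]f=\big((\partial_{x_i}b)\,f\big)\ast\chi_\varepsilon+\int f(y)\,[b(x)-b(y)]\,(\partial_{x_i}\chi_\varepsilon)(x-y)\,dy.
\]

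The main obstacle is the convergence of this last identity, and it is exactly here that the mere Lipschitz regularity of $b$ must be handled with care, since the two terms on the right do \emph{not} vanish individually. The first converges in $L^2(dV)$ to $(\partial_{x_i}b)f$; after the substitution $y=x-\varepsilon w$ the second becomes $\int f(x-\varepsilon w)\,\varepsilon^{-1}[b(x)-b(x-\varepsilon w)]\,(\partial_{x_i}\chi)(w)\,dw$, and using $\int w_j(\partial_{x_i}\chi)(w)\,dw=-\delta_{ij}$ one sees it should converge to $-(\partial_{x_i}b)f$, so that the sum cancels to $0$. To make this rigorous with $b$ only Lipschitz I would invoke the uniform bound $|\varepsilon^{-1}[b(x)-b(x-\varepsilon w)]|\le L|w|$, which keeps the second integral bounded in $L^2(dV)$ by $C\|f\|_{L^2}$ uniformly in $\varepsilon$; together with the a.e. convergence of the difference quotients to $\nabla b(x)\cdot w$ (Rademacher) this gives the limit $-(\partial_{x_i}b)f$ by dominated convergence. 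Equivalently, one approximates $b$ in Lipschitz norm by $C^1$ functions, for which the cancellation is immediate, and passes to the limit using the uniform bound. This is precisely the content of Friedrichs' commutator lemma for first-order operators with Lipschitz coefficients, and with the two contributions in the decomposition above it completes the proof.
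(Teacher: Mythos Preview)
Your proof is correct and follows essentially the same route as the paper: you absorb the weight $\gamma^4$ into the coefficient by setting $b=\gamma^2\beta$, reduce the statement to $b\,\partial_{x_i}f_\varepsilon\to b\,\partial_{x_i}f$ in the flat $L^2(\R^4)$, and then invoke Friedrichs' lemma for a first-order operator with a Lipschitz coefficient. The only difference is that the paper simply cites the Friedrichs extension lemma (\cite{CS}, Corollary~D.2, noting that its proof goes through for Lipschitz coefficients), whereas you spell out the commutator argument in detail.
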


\begin{proof}
Let us first recall that for a function $g$, we have that
$$\|g\|_{L^2(X)} \sim \|\gamma^2 g\|_{L^2(\C^2)},$$
where we denote by $\|\cdot\|_{L^2(\C^2)}$ the standard Euclidean
$L^2$-norm in $\C^2$. More generally, we have
$$\|g\|_{L^{2,k}(X)} = \|\gamma^k g\|_{L^2(X)} \sim \|\gamma^{2+k} g\|_{L^2(\C^2)}$$
for any weight $k\in\R$.

Hence, we note that
$$f,\  \gamma^2 Df \in L^2(\C^2).$$
Thus $f_\epsilon$ is in fact well defined (and smooth).

We also note that \eqref{eq:1} is equivalent to
\begin{eqnarray*}\label{eq:2}
\gamma^2 Df_\epsilon \rightarrow \gamma^2 Df\ \ \ \mbox{ in } \ \
L^2(\C^2).
\end{eqnarray*}

So, it makes sense to study the operator $\gamma^2 D$ which we may
write as
$$P:= a(u,v) \frac{\partial}{\partial x_i},$$
where $a$ is Lipschitz continuous by assumption. Our problem is reduced to
showing that
\begin{eqnarray*}\label{eq:3}
P f_\epsilon \rightarrow Pf \ \ \ \mbox{ in } \ \ L^2(\C^2)
\end{eqnarray*}
for $f\in L^2(\C^2)$ such that $Pf\in L^2(\C^2)$.

But that holds by the well-known Friedrichs extension lemma (see
e.g. \cite{CS}, Corollary D.2, which holds for Lipschitz
continuous coefficients as is immediate from the proof in \cite{CS}).
\end{proof}

We now apply Lemma \ref{gdf} to $\mdbar f$.
  Here we work with the
differential operator $D$ on a form $g=g_1 \omega_1 + g_2\omega_2$
defined by
\begin{equation*}
Dg=\overline{L}_1 g_2 -
 \overline{L}_2 g_1.
\end{equation*}
 Then Lemma \ref{gdf} shows that, for
  $f\in L^{2,-2}_{(0,1)}(X)$, $D f_{\varepsilon} \rightarrow
 Df$.
 It is trivial that $\xi_{-2}f_{\varepsilon}
 \overset{L^2}{\rightarrow} \xi_{-2}f$ for
  $f\in L^{2,-2}_{(0,1)}(X)$, and so we have $\mdbar f_{\varepsilon}
 \overset{L^2}{\rightarrow} \mdbar f$.
Furthermore, the proof of Lemma \ref{gdf} can be applied to the
first order differential operator associated with the operator
$\mdbar^{\ast}$ and so as a corollary, we have
\begin{cor} Let $f\in L^{2,-2}_{(0,1)}(X)
\cap
 \mbox{dom}(\mdbar)\cap \mbox{dom}(\mdbar^{\ast})$.  Then
\label{gdbar}
\begin{align*}
&\mdbar f_{\varepsilon}
 \overset{L^2}{\rightarrow} \mdbar f\\
&\mdbar^{\ast} f_{\varepsilon}
 \overset{L^2}{\rightarrow} \mdbar^{\ast} f.
\end{align*}
\end{cor}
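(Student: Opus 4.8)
The plan is to prove the two convergences separately, observing that the first, $\mdbar f_{\varepsilon}\to\mdbar f$, has in effect already been obtained in the discussion immediately preceding the statement: it results from combining $Df_{\varepsilon}\to Df$ (Lemma \ref{gdf} applied to $Dg=\overline{L}_1 g_2-\overline{L}_2 g_1$) with the trivial convergence $\xi_{-2}f_{\varepsilon}\to\xi_{-2}f$ and the representation \eqref{bar}. The genuine work is therefore the second line, $\mdbar^{\ast}f_{\varepsilon}\to\mdbar^{\ast}f$, and for it I would mirror the $\mdbar$-argument word for word, using the representation \eqref{star} in place of \eqref{bar}.

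Concretely, I would write $\mdbar^{\ast}f=D^{\ast}f+\xi_{-2}f$, where $D^{\ast}g:=L_1 g_1+L_2 g_2$ is the first-order operator extracted from \eqref{star}. By \eqref{vfform} each $L_j$ has coefficients $\beta$ with $|\beta|\sim 1/\gamma$ and smooth off the origin, so the functions $\gamma^2\beta$ are Lipschitz; hence $D^{\ast}$ is a finite sum of operators of exactly the type treated in Lemma \ref{gdf}. The convergence $D^{\ast}f_{\varepsilon}\to D^{\ast}f$ in $L^2(X)$ then follows once I verify the hypothesis $D^{\ast}f\in L^2(X)$. This I would obtain from $f\in\mbox{dom}(\mdbar^{\ast})$: indeed $\mdbar^{\ast}f\in L^2(X)$, and $\xi_{-2}f\in L^2(X)$ because $\|\xi_{-2}f\|_{L^2(X)}\lesssim\|f\|_{L^{2,-2}}$ with $f\in L^{2,-2}(X)$, so \eqref{star} gives $D^{\ast}f=\mdbar^{\ast}f-\xi_{-2}f\in L^2(X)$. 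Combining $D^{\ast}f_{\varepsilon}\to D^{\ast}f$ with the immediate $\xi_{-2}f_{\varepsilon}\to\xi_{-2}f$ (immediate since $L^{2,-2}(X)\sim L^2(\C^2)$ and mollification converges in $L^2(\C^2)$, whence $\|\xi_{-2}(f_{\varepsilon}-f)\|_{L^2(X)}\lesssim\|f_{\varepsilon}-f\|_{L^2(\C^2)}\to 0$) yields $\mdbar^{\ast}f_{\varepsilon}=D^{\ast}f_{\varepsilon}+\xi_{-2}f_{\varepsilon}\to D^{\ast}f+\xi_{-2}f=\mdbar^{\ast}f$, as desired.

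The step I expect to require the most care is the application of Lemma \ref{gdf} to $D^{\ast}$. The lemma is stated for a single scalar operator $\beta\,\partial/\partial x_i$ applied to one function with $Df\in L^2$, whereas $D^{\ast}$ is a combination in which $L_1$ and $L_2$ act on the different coefficients $f_1$ and $f_2$; one cannot split the sum, because only the combined quantity $L_1 f_1+L_2 f_2$ is known to lie in $L^2(X)$, not the individual terms. The point to check is thus that the Friedrichs commutator estimate underlying Lemma \ref{gdf} applies verbatim to the full first-order operator $P=\gamma^2 D^{\ast}$ acting on the vector of coefficients: it requires only $f\in L^2(\C^2)$ and $Pf=\gamma^2 D^{\ast}f\in L^2(\C^2)$, both guaranteed by the reductions above, and then $P(f\ast\chi_{\varepsilon})-(Pf)\ast\chi_{\varepsilon}\to 0$ in $L^2(\C^2)$ coefficientwise, so the sum converges as a whole. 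This is precisely the remark made after Lemma \ref{gdf} that its proof applies to the first-order operator associated with $\mdbar^{\ast}$.
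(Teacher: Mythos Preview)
Your proposal is correct and follows essentially the same approach as the paper, which simply remarks that ``the proof of Lemma \ref{gdf} can be applied to the first order differential operator associated with the operator $\mdbar^{\ast}$.'' You have in fact spelled out more carefully than the paper the one genuine subtlety---that only the sum $L_1 f_1+L_2 f_2$ is known to lie in $L^2(X)$, so the Friedrichs argument must be applied to the combined operator $P=\gamma^2 D^{\ast}$ rather than termwise.
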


Given
 $f\in L^{2,-2}_{(0,1)}(X)$ we can now apply Proposition \ref{c1est}
  to $f_{\varepsilon}$ to obtain
\begin{equation*}
\|  \overline{L} f_{\varepsilon} \|^2
 \le \| \mdbar f_{\varepsilon}\|^2 +
\|  \mdbar^{\ast} f_{\varepsilon}\|^2
  +\|\xi_{-2}f_{\varepsilon}\|^2
  .
\end{equation*}

Letting ${\varepsilon}\rightarrow 0$ and using Corollary \ref{gdbar}
 shows that for $f\in \mbox{dom}(\mdbar)\cap \mbox{dom}(\mdbar^{\ast})\cap
L^{2,-2}_{(0,1)}(X)$, we have the estimate
\begin{equation*}
\| \overline{L} f \|^2
 \le \|  \mdbar f\|^2 + \| \mdbar^{\ast} f\|^2
  +\|\xi_{-2}f\|^2
\end{equation*}
 from which we
finally conclude the generalization of Proposition \ref{c1est}:
\begin{thrm}
\label{weightest}
 Let $f\in
L^{2,-2}_{(0,1)}(X)\cap\mbox{dom}(\mdbar)\cap
\mbox{dom}(\mdbar^{\ast})$.  Then we have the estimate
\begin{equation*}
\| \overline{L} f \|^2
 \le \| \mdbar f\|^2 + \| \mdbar^{\ast} f\|^2
  +\|\xi_{-2}f\|^2.
\end{equation*}
\end{thrm}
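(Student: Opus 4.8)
The plan is to obtain Theorem \ref{weightest} as the limiting form of Proposition \ref{c1est}, using the mollified forms $f_\varepsilon$ constructed above as the bridge between the general $L^{2,-2}$ setting and the $C^1$ setting in which the integration-by-parts argument is valid. First I would reduce, via the partition of unity already introduced, to the case where $f$ is supported in a fixed neighborhood of the singularity; the contribution of the piece supported near the strictly pseudoconvex boundary $\partial X$ is handled by the classical approximation theory and introduces no new weights. For the piece supported near the origin, I would note that $f\in L^{2,-2}_{(0,1)}(X)$ means exactly $f\in L^2(\C^2)$ in the Euclidean sense, by the equivalence $\|g\|_{L^{2,k}(X)}\sim\|\gamma^{2+k}g\|_{L^2(\C^2)}$ recalled in the proof of Lemma \ref{gdf} (the case $k=-2$). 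Hence the convolutions $f_\varepsilon$ are well defined and smooth, and in particular lie in $C^1_{(0,1)}(X)$ with support near the origin, so Proposition \ref{c1est} applies to each $f_\varepsilon$.

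Next I would apply Proposition \ref{c1est} to $f_\varepsilon$ to get
\begin{equation*}
\|\overline{L} f_\varepsilon\|^2 \le \|\mdbar f_\varepsilon\|^2 + \|\mdbar^{\ast} f_\varepsilon\|^2 + \|\xi_{-2} f_\varepsilon\|^2,
\end{equation*}
and then let $\varepsilon\to 0$. The right-hand side passes to the limit directly: Corollary \ref{gdbar} gives $\mdbar f_\varepsilon\to\mdbar f$ and $\mdbar^{\ast} f_\varepsilon\to\mdbar^{\ast} f$ in $L^2(X)$, while $\xi_{-2} f_\varepsilon\to\xi_{-2} f$ is immediate since $\|\xi_{-2}g\|_{L^2(X)}\lesssim\|g\|_{L^{2,-2}(X)}$ and $f_\varepsilon\to f$ in $L^{2,-2}(X)$ (again by the $k=-2$ equivalence). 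Thus the right-hand side converges to $\|\mdbar f\|^2 + \|\mdbar^{\ast} f\|^2 + \|\xi_{-2} f\|^2$, which is finite by hypothesis.

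The delicate point, and what I expect to be the main obstacle, is the passage to the limit on the left-hand side, because a priori we do not know that $\overline{L} f\in L^2(X)$ --- this is part of what the theorem asserts --- so Lemma \ref{gdf} cannot yet be invoked to conclude $\overline{L} f_\varepsilon\to\overline{L} f$. To break this apparent circularity I would argue by weak lower semicontinuity. The convergence of the right-hand side shows that $\|\overline{L} f_\varepsilon\|$ is bounded uniformly in $\varepsilon$, so, writing each component as $\gamma^2\overline{L}_j = a\,\partial/\partial x_i$ with $a$ Lipschitz (the hypothesis of Lemma \ref{gdf}, valid because $|\beta_i|\sim 1/\gamma$ by \eqref{vfform}), the family $\{\gamma^2\overline{L}_j(f_i)_\varepsilon\}$ is bounded in $L^2(\C^2)$ and admits a weakly convergent subsequence. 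Since $(f_i)_\varepsilon\to f_i$ in $L^2(\C^2)$ and $a$ is Lipschitz, integrating against test functions identifies this weak limit with the distributional $\gamma^2\overline{L}_j f_i$; in particular $\overline{L}_j f_i\in L^2(X)$, so $\overline{L} f\in L^2(X)$. Weak lower semicontinuity of the $L^2$-norm then yields
\begin{equation*}
\|\overline{L} f\|^2 \le \liminf_{\varepsilon\to 0}\|\overline{L} f_\varepsilon\|^2 \le \|\mdbar f\|^2 + \|\mdbar^{\ast} f\|^2 + \|\xi_{-2} f\|^2,
\end{equation*}
which is the claimed estimate. Once $\overline{L} f\in L^2(X)$ has been secured in this way, one could alternatively invoke Lemma \ref{gdf} to upgrade weak convergence to norm convergence, but for the inequality the semicontinuity step alone suffices.
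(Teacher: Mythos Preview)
Your proposal is correct and follows essentially the same approach as the paper: reduce by partition of unity to forms supported near the origin, apply Proposition \ref{c1est} to the mollifications $f_\varepsilon$, and pass to the limit using Corollary \ref{gdbar} on the right-hand side. The paper is in fact terser than you are on the left-hand side, simply writing ``letting $\varepsilon\to 0$'' without comment; your weak-lower-semicontinuity argument fills in precisely the step the paper leaves implicit, and is the right way to close it.
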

We again note that $\|\xi_{-2}f\|^2 \sim \|f\|^2_{L^{2,-2}}$.

\section{Intermediate Sobolev norms}
\label{subnorms}

For the time being we work on a neighborhood of the singularity.
Then Theorem \ref{weightest} can be combined with (\ref{relatel}),
\begin{equation*}
\|Lf\| \lesssim \| \overline{L} f\| + \|\xi_{-2} f\|,
\end{equation*}
to show
\begin{equation}
\label{LL}
 \|  Lf \|^2 + \|\overline{L}f\|^2
 \lesssim \| \mdbar f\|^2 + \| \mdbar^{\ast} f\|^2
  +\|\xi_{-2}f\|^2
\end{equation}
for $f\in L^{2,-2}_{(0,1)}(X)\cap\mbox{dom}(\mdbar)\cap
\mbox{dom}(\mdbar^{\ast})$ with support in a neighborhood of the
singularity.

We now define Sobolev spaces on our space $X$.  We recall the
 association of $X$
with the Hermitian space $(B,g)$ from Section \ref{prelim}, where
 $B=\{ |v|^4+|w|^4<1\}$ and $g\sim \gamma^4$ is the metric
 (\ref{metric}).
We let
 $v=x_1+ix_2$ and $w=x_3+ix_4$.
 For $k$ an integer, $W^k(X)$ is the
space of functions whose derivatives with respect to the
coordinates $x_j$ of order less than or equal to $k$ are in
$L^2(X)$. Thus, for $f\in W^k(X)$
\begin{equation}
\label{sobolev1}
\|f\|_{W^k(X)} \sim
 \sum_{|l|\le k} \int_B \left|
  \frac{\partial^{|l|}}{\partial x^l} f\right|^2
  \gamma^4 d{\mathbf x}^4
 ,
\end{equation}
where $l=(l_1,\ldots,l_4)$ is a multi-index of length 4 and
\begin{equation*}
\frac{\partial^{|l|}}{\partial x^l}
 = \frac{\partial^{l_1}}{\partial x_1^{l_1}}\cdots
  \frac{\partial^{l_4}}{\partial x_4^{l_4}}.
\end{equation*}
For a $(0,1)$-form, $f=f_1\overline{\omega}_1
 +f_2\overline{\omega}_2$, the $W^k_{(0,1)}$-norm of $f$ is
 equivalent to the sum of the $W^k$-norms of $f_1$ and $f_2$.

 We note that
 any derivative in the $x$ coordinates above
  is a combination (by multiplication by bounded
  functions) of the vector fields $\gamma L_j$
 and $\gamma \overline{L}_j$, for $j=1,2$.  Given a
  form  $f\in L^{2,-1}_{(0,1)}(X)$, then
   $\gamma f\in L^{2,-2}_{(0,1)}(X)$, and we can thus apply
  (\ref{LL}) to $\gamma f$ to bound the
  $W^1(X)$ norm of a $(0,1)$ form according to
\begin{thrm}
\label{w1}
  Let $f\in
L^{2,-1}_{(0,1)}(X)\cap\mbox{dom}(\mdbar)\cap
\mbox{dom}(\mdbar^{\ast})$ with support in a neighborhood of the
singularity. Then we have the estimate
\begin{equation*}
\|  f \|_{W^1(X)}^2
 \lesssim \|\gamma \mdbar f\|^2 + \|\gamma \mdbar^{\ast} f\|^2
  +\|\xi_{-1}f\|^2
  .
\end{equation*}
\end{thrm}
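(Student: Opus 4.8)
The plan is to deduce this from the already-established weighted estimate \eqref{LL} by applying it to the rescaled form $g:=\gamma f$ and then translating the conclusion back to $f$. First I would record the $W^1$-side of the comparison. By \eqref{sobolev1}, together with the observation (made just before the statement) that every coordinate derivative $\partial/\partial x_j$ is a combination, with bounded coefficients, of the vector fields $\gamma L_k$ and $\gamma\overline{L}_k$ for $k=1,2$, one obtains
\begin{equation*}
\|f\|_{W^1(X)}^2 \lesssim \|f\|^2 + \|\gamma L f\|^2 + \|\gamma \overline{L} f\|^2 .
\end{equation*}
Since $\gamma\lesssim 1$ on $B$, the term $\|f\|^2=\|f\|_{L^{2,0}}^2$ is dominated by $\|f\|_{L^{2,-1}}^2\sim\|\xi_{-1}f\|^2$, so it suffices to control $\|\gamma L f\|^2 + \|\gamma\overline{L} f\|^2$ by the right-hand side of the theorem.

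Next I would verify that $g=\gamma f$ satisfies the hypotheses of \eqref{LL}. The weight bookkeeping gives $\|\gamma f\|_{L^{2,-2}}=\|f\|_{L^{2,-1}}<\infty$, so $\gamma f\in L^{2,-2}_{(0,1)}(X)$ and $\gamma f$ is still supported near the singularity. For the domain conditions, note that, because the coefficients of $L_j,\overline{L}_j$ are of size $1/\gamma$ while $\gamma$ is Lipschitz (its ordinary first derivatives being bounded), the functions $L_j\gamma$ and $\overline{L}_j\gamma$ are of type $\xi_{-1}$. Hence the extra terms $\mdbar\gamma\wedge f$ and the corresponding contraction appearing in $\mdbar^{\ast}(\gamma f)$ are of type $\xi_{-1}f$, which lies in $L^2(X)$ precisely because $f\in L^{2,-1}$; thus $\gamma f\in\mbox{dom}(\mdbar)\cap\mbox{dom}(\mdbar^{\ast})$. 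This is exactly the point at which the hypothesis $L^{2,-1}$, rather than the weaker $L^{2,-2}$ of \eqref{LL}, is needed.

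Applying \eqref{LL} to $g=\gamma f$ yields $\|L(\gamma f)\|^2+\|\overline{L}(\gamma f)\|^2\lesssim\|\mdbar(\gamma f)\|^2+\|\mdbar^{\ast}(\gamma f)\|^2+\|\xi_{-2}(\gamma f)\|^2$, and I would then expand each term by the Leibniz rule inside the $\xi_k$-calculus. On the left, $L(\gamma f)=\gamma Lf+(L\gamma)f=\gamma Lf+\xi_{-1}f$, and likewise for $\overline{L}$, so $\|\gamma Lf\|+\|\gamma\overline{L}f\|\lesssim\|L(\gamma f)\|+\|\overline{L}(\gamma f)\|+\|\xi_{-1}f\|$. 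On the right, using \eqref{bar} and \eqref{star} together with the identities $\xi_{-2}(\gamma f)=\xi_{-1}f$ and $\gamma\,\xi_{-2}=\xi_{-1}$, one finds $\mdbar(\gamma f)=\gamma\mdbar f+\xi_{-1}f$ and $\mdbar^{\ast}(\gamma f)=\gamma\mdbar^{\ast}f+\xi_{-1}f$. Substituting and absorbing all $\xi_{-1}f$ contributions gives
\begin{equation*}
\|\gamma Lf\|^2+\|\gamma\overline{L}f\|^2 \lesssim \|\gamma\mdbar f\|^2+\|\gamma\mdbar^{\ast}f\|^2+\|\xi_{-1}f\|^2,
\end{equation*}
which, combined with the first display, proves the theorem.

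The main obstacle is the bookkeeping in the last step: one must check that every commutator $[\gamma,\,\cdot\,]$ produced by the Leibniz rule lands at the $\xi_{-1}$ level and no worse, i.e. that multiplication by $\gamma$ shifts the singular weight of each error term by exactly one. Given the formalism of \eqref{xiprop} this is essentially mechanical, the one genuinely delicate ingredient being the verification $\gamma f\in\mbox{dom}(\mdbar^{\ast})$ discussed above, which is what pins the weight hypothesis at $L^{2,-1}$.
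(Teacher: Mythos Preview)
Your proposal is correct and follows exactly the route the paper takes: apply \eqref{LL} to $\gamma f$ and then undo the multiplication by $\gamma$ via the Leibniz rule, using that $L_j\gamma,\overline{L}_j\gamma$ are of type $\xi_{-1}$. In fact you supply considerably more detail than the paper does---in particular the verification that $\gamma f\in\mbox{dom}(\mdbar)\cap\mbox{dom}(\mdbar^{\ast})$, which the paper leaves implicit---so your write-up is, if anything, more complete.
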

Our subelliptic estimates can be viewed as a
 trade-off of a decrease in derivatives for less stringent
 hypotheses on to which $L^p$ spaces a given form may belong.
 Our results show that the restriction of
  a form $f$ from more $L^p$ spaces for $p>2$,
  allows for higher orders of derivatives of $f$
 to be estimated.

For non-integer Sobolev spaces we use the Fourier multiplier
 operator on $\mathbb{R}^4$ with the symbol
  $(1+|\zeta|^2)^{\epsilon/2}$.  $\Lambda^{\epsilon} f$ is then
  defined through its Fourier transform:
\begin{equation*}
\widehat{\Lambda^{\epsilon} f}(\zeta)
 =(1+|\zeta|^2)^{\epsilon/2} \widehat{f}(\zeta).
\end{equation*}
 $W^{\epsilon}(X)$ is the restriction to the
  domain $B$ of the
  space of functions on $\mathbb{R}^4$ which satisfy
\begin{equation}\label{sobolev2}
 \int_{\mathbb{R}^4}
  \left|\Lambda^{\epsilon}f\right|^2\gamma^4 d{\mathbf x}^4
   <\infty.
\end{equation}
The rest of this section is dedicated to proving an estimate for a
norm for $W^{\epsilon}(X)$.  First we note that for a function
supported outside a neighborhood of the singularity at the origin,
its weighted $L^2$ norm is equivalent to an unweighted norm.  If
we define
\begin{equation*}
\triangle_{\lambda}:= \{|v|^4+|w|^4<\lambda\}
\end{equation*}
for $\lambda<1$, then
 $\|f\|_{W^{\epsilon}(X\setminus\triangle_{\lambda})}$ is
 equivalent to the usual $W^{\epsilon}$ norm on the domain
 $B\setminus\triangle_{\lambda}\subset\mathbb{C}^2$.  With
  $\lambda_1<\lambda_2<1$ we let $\varphi_{\lambda_1,\lambda_2}\in
  C^{\infty}(\mathbb{R}^4)$ with support in
  $\triangle_{\lambda_2}$ and such that
  $\varphi_{\lambda_1,\lambda_2}\equiv
  1$ in $\triangle_{\lambda_1}$.
  Then
\begin{equation*}
\|f\|_{W^{\epsilon}(X)}
 \lesssim \|f\|_{W^{\epsilon}(X\setminus\triangle_{\lambda_1})}
  + \|\varphi_{\lambda_1,\lambda_2}f\|_{W^{\epsilon}(X)},
\end{equation*}
where, from above,
\begin{equation*}
\|\varphi_{\lambda_1,\lambda_2}f\|_{W^{\epsilon}(X)}
 \sim
\int_{\mathbb{R}^4}
  \left|\Lambda^{\epsilon}
  \varphi_{\lambda_1,\lambda_2}f\right|^2
  \gamma^4 d{\mathbf x}^4.
\end{equation*}

Without further notation we assume until otherwise stated that
 the functions we work with have support in a neighborhood of the
 singularity.

 In analogy with our weighted $L^2$ spaces we can define weighted
 versions of the Sobolev spaces in (\ref{sobolev1}):
 \begin{equation*}
 W^{s,k}_{(p,q)}(X)=\left\{ f: \|f\|_{W^{s,k}}^2=
  \sum_{|l|\le s} \int_B \gamma^{2k} \left|
  \frac{\partial^{|l|}}{\partial x^l} f\right|^2
  \gamma^4 d{\mathbf x}^4 <\infty \right\},
\end{equation*}

 Given $0\le \epsilon \le 1$, if $f\in W^{1,1-\epsilon}(X)$, we can write
\begin{align}
\nonumber
 \| f\|_{W^{\epsilon}(X\setminus \triangle_{\lambda})}^2 &\leq
 C_{\lambda}
  \|f\|_{W^1(X\setminus \triangle_{\lambda})}^2\\
  &\le C_{\lambda}' \|f\|_{W^{1,1-\epsilon}(X)}^2
   \label{lemest}
\end{align}
with constants $C_\lambda, C_{\lambda}'>0$ which do not depend on
$f$.

 The idea to obtain a $W^{\epsilon}$ estimate for functions
  $f\in W^{1,1-\epsilon}(X)$ is to let $\lambda\rightarrow 0$
   in (\ref{lemest}).
However, we note that the constant of the inequality in
(\ref{lemest}) may depend on $\lambda$. The rest of the section
shows that
 when using (\ref{lemest}) to estimate
  $\|f\|_{W^{\epsilon}(X)}$, the
$\lambda$-dependence of the constant may be controlled by
 adjusting an $L^2$ norm on the right hand side.

For $\alpha<1/2$ we can use a simple construction to find
$\varphi_{\alpha}=
 \varphi_{\alpha,2\alpha}$ so that
$\varphi_{\alpha}\equiv 1$ in $\triangle_{\alpha}$ and
 $\varphi_{\alpha}\equiv 0$ in $X\setminus \triangle_{2\alpha}$
 with the property that
\begin{equation*}
| \widehat{\Lambda^4\varphi}_{\alpha}(\xi) | \le c,
\end{equation*}
with $c$ a constant independent of $\alpha$.

From above we have
\begin{equation}
 \| f\|_{W^{\epsilon}(X)}
 \le
  \| f\|_{W^{\epsilon}(X\setminus\triangle_{\alpha})}
   + \|\varphi_{\alpha} f\|_{W^{\epsilon}(X)}
   \label{combine}
 .
\end{equation}
For the second term on the right hand side we can consider
 $f$ and $\varphi_{\alpha}$ to be defined on all of
  $\mathbb{C}^2$ by extension by 0 outside of $B$.
 We then use
\begin{align}
\nonumber
 \|\varphi_{\alpha} f\|_{W^{\epsilon}(X)}
  & \le
   \| f \Lambda^{\epsilon}\varphi_{\alpha} \|_{L^2(X)}
    + \| \varphi_{\alpha}\Lambda^{\epsilon} f\|_{L^2(X)}\\
    \label{2ndterm}
 &\le \| f \Lambda^{\epsilon}\varphi_{\alpha} \|_{L^2(X)}
  +c(\alpha) \|f\|_{W^{\epsilon}(X)},
\end{align}
where $c(\alpha)\rightarrow 0$ as $\alpha\rightarrow 0$.

We concentrate on the term $ \Lambda^{\epsilon}\varphi_{\alpha}
 = \Lambda^{\epsilon-4}\circ\Lambda^4 \varphi_{\alpha}
$.
\begin{align*}
|\widehat{\Lambda^{\epsilon}\varphi_{\alpha}}|
 &= (1+|\xi|^2)^{(\epsilon-4)/2}
  |\widehat{\Lambda^4 \varphi_{\alpha}}|\\
  &\le c (1+|\xi|^2)^{(\epsilon-4)/2},
\end{align*}
from which we have
 $\widehat{\Lambda^{\epsilon}\varphi_{\alpha}}\in L^p(\mathbb{R}^4)$
  for $p> 4/(4-\epsilon)$ and
 $\Lambda^{\epsilon}\varphi_{\alpha}\in L^q(\mathbb{R}^4)$ for
  $ q<4/\epsilon$.  Hence we have
\begin{equation*}
\| f \Lambda^{\epsilon}\varphi_{\alpha} \|_{L^2(X)}
 \lesssim
  \left(\int_{X} |f|^{2s'}dV_X \right)^{\frac{1}{2s'}}
\left(\int_{\mathbb{R}^4}
|\Lambda^{\epsilon}\varphi_{\alpha}|^{2s} \gamma^4 d{\bf x}^4
\right)^{\frac{1}{2s}},
\end{equation*}
where $s=q/2$ and
 $s'=q/(q-2)$, i.e.
\begin{equation*}
\| f \Lambda^{\epsilon}\varphi_{\alpha} \|_{L^2(X)}
 \lesssim \| f\|_{L^p(X)},  \qquad p>\frac{4}{2-\epsilon}.
\end{equation*}

Returning to (\ref{2ndterm}), we have for some $\alpha$ small enough
(which we fix from now on):
\begin{align*}
\|\varphi_{\alpha} f\|_{W^{\epsilon}(X)}
  &\lesssim \| f \Lambda^{\epsilon}\varphi_{\alpha} \|_{L^2(X)}
  +(s.c.) \|f\|_{W^{\epsilon}(X)}\\
 &\lesssim \| f\|_{L^p(X)} + (s.c.)\|f\|_{W^{\epsilon}(X)}.
\end{align*}

If we assume $f\in L^{p}_{(0,1)}(X)\cap
 \mbox{dom}(\mdbar)\cap \mbox{dom}(\mdbar^{\ast})$, then
  $\| \xi_{-\epsilon} f\|_{L^2(X)}
 \lesssim
  \|f\|_{L^p(X)}$
for $p>4/(2-\epsilon)$ and
 $f\in L^{2,-\epsilon}_{(0,1)}(X)\cap
 \mbox{dom}(\mdbar)\cap \mbox{dom}(\mdbar^{\ast})$.
   Furthermore, $\gamma^{1-\epsilon}
  f\in L^{2,-1}_{(0,1)}(X)$, and
we can apply Theorem \ref{w1} to show
 $\gamma^{1-\epsilon} f \in W^1(X)$.  Since
  $f\in L^{2,-\epsilon}(X)$, we also have
   $f\in W^{1,1-\epsilon}(X)$ and we can apply
  (\ref{lemest}) to estimate the term
$\| f\|_{W^{\epsilon}(X\setminus\triangle_{\alpha})}$
 in (\ref{combine}).  Finally, we can write
\begin{align*}
\| f\|_{W^{\epsilon}(X)} &\lesssim
 \| f\|_{W^{\epsilon}(X\setminus\triangle_{\alpha})}
 + \| f\|_{L^p(X)}
  + (s.c.)\|f\|_{W^{\epsilon}(X)}\\
 &\lesssim
  \|f\|_{W^{1,1-\epsilon}(X)}
  + \| f\|_{L^p(X)}
  + (s.c.)\|f\|_{W^{\epsilon}(X)}.
\end{align*}
  We conclude the
following
\begin{prop}
\label{subprop}
 Let $0\le \epsilon\le 1$ and $p>4/(2-\epsilon)$.  For
 $f\in L^{p}_{(0,1)}(X)\cap
 \mbox{dom}(\mdbar)\cap \mbox{dom}(\mdbar^{\ast})$
 with support in a neighborhood of the singularity
 then $f\in W^{\epsilon}(X)$.
   The norm $\|f\|_{W^{\epsilon}(X)}$ is bounded by
\begin{equation*}
\|f\|_{W^{\epsilon}(X)}\lesssim
  \|f\|_{W^{1,1-\epsilon}(X)}
   +\|f\|_{L^p(X)}.
\end{equation*}
\end{prop}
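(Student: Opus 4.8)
The plan is to localize with the cutoff $\varphi_\alpha$ and split the $W^\epsilon(X)$-norm into a contribution supported away from the singularity and a contribution concentrated at it, estimating the first by the weighted Sobolev norm $\|f\|_{W^{1,1-\epsilon}(X)}$ and the second by $\|f\|_{L^p(X)}$. Concretely, I would start from \eqref{combine},
$$\|f\|_{W^\epsilon(X)} \le \|f\|_{W^\epsilon(X\setminus\triangle_\alpha)} + \|\varphi_\alpha f\|_{W^\epsilon(X)},$$
and handle the two terms separately, keeping $\alpha$ as a free parameter to be fixed only at the very end.

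For the first term I would use that, away from the origin, the pseudometric is nondegenerate, so on $X\setminus\triangle_\alpha$ the norm $\|\cdot\|_{W^\epsilon(X)}$ is comparable to the ordinary Euclidean fractional Sobolev norm; this is exactly the content of \eqref{lemest}, which gives $\|f\|_{W^\epsilon(X\setminus\triangle_\alpha)}^2 \lesssim \|f\|_{W^{1,1-\epsilon}(X)}^2$ with a constant depending only on the now-fixed $\alpha$. For this to apply I must first know that $f\in W^{1,1-\epsilon}(X)$, and here is where Theorem \ref{w1} enters. The $L^p$-hypothesis with $p>4/(2-\epsilon)$ yields $\|\xi_{-\epsilon}f\|_{L^2(X)}\lesssim\|f\|_{L^p(X)}$, so $f\in L^{2,-\epsilon}(X)$ and consequently $\gamma^{1-\epsilon}f\in L^{2,-1}(X)$. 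Applying Theorem \ref{w1} to $\gamma^{1-\epsilon}f$ places it in $W^1(X)$, which is precisely the statement $f\in W^{1,1-\epsilon}(X)$.

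For the second term I would invoke the commutator bound \eqref{2ndterm},
$$\|\varphi_\alpha f\|_{W^\epsilon(X)} \le \|f\,\Lambda^\epsilon\varphi_\alpha\|_{L^2(X)} + c(\alpha)\|f\|_{W^\epsilon(X)}, \qquad c(\alpha)\to 0,$$
and estimate the genuinely new term by Hölder's inequality. Factoring $\Lambda^\epsilon\varphi_\alpha=\Lambda^{\epsilon-4}\circ\Lambda^4\varphi_\alpha$ and using the uniform bound on $\widehat{\Lambda^4\varphi_\alpha}$, one finds $\Lambda^\epsilon\varphi_\alpha\in L^q(\R^4)$ for every $q<4/\epsilon$; pairing against $f\in L^p$ with the conjugate exponent then gives $\|f\,\Lambda^\epsilon\varphi_\alpha\|_{L^2(X)}\lesssim\|f\|_{L^p(X)}$ exactly when $p>4/(2-\epsilon)$.

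The main obstacle, and the reason the $L^p$-hypothesis is indispensable, is the $\lambda$-dependence of the constant in \eqref{lemest}: one cannot simply send $\lambda\to 0$, since $C'_\lambda\to\infty$. The resolution is not to shrink $\lambda$ but to fix $\alpha$ and absorb the near-singularity contribution. Collecting the two estimates yields
$$\|f\|_{W^\epsilon(X)} \lesssim \|f\|_{W^{1,1-\epsilon}(X)} + \|f\|_{L^p(X)} + \big(c(\alpha)+(s.c.)\big)\|f\|_{W^\epsilon(X)},$$
and since $c(\alpha)\to 0$ I would fix $\alpha$ so small that the coefficient of the last term is below $\tfrac12$, absorb it, and conclude. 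To make the absorption rigorous I would first run the whole argument with the symbol of $\Lambda^\epsilon$ truncated at high frequency, so that every term is finite a priori, obtain the estimate with constants uniform in the truncation, and then remove the truncation; the membership $f\in W^\epsilon(X)$ then follows from the resulting bound together with $f\in W^{1,1-\epsilon}(X)$.
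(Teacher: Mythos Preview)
Your proposal is correct and follows essentially the same route as the paper: split via \eqref{combine}, control the exterior piece by \eqref{lemest} after invoking Theorem~\ref{w1} on $\gamma^{1-\epsilon}f$ to secure $f\in W^{1,1-\epsilon}(X)$, control the interior piece by \eqref{2ndterm} together with the H\"older/Hausdorff--Young estimate on $\Lambda^\epsilon\varphi_\alpha$, and then fix $\alpha$ small and absorb. The only minor remark is that ``$\gamma^{1-\epsilon}f\in W^1(X)$'' is not literally the same as ``$f\in W^{1,1-\epsilon}(X)$''---the commutator $[\nabla,\gamma^{1-\epsilon}]$ produces a term of type $\xi_{-\epsilon}f$, which is exactly what your already-established bound $\|\xi_{-\epsilon}f\|_{L^2}\lesssim\|f\|_{L^p}$ controls; the paper records this step by writing ``since $f\in L^{2,-\epsilon}(X)$, we also have $f\in W^{1,1-\epsilon}(X)$.''
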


\section{Subelliptic estimates}
 We apply Proposition \ref{subprop}
  to each component, $f_j$, $j=1,2$,
  of a given form
  $f\in L^{p}_{(0,1)}(X)\cap
 \mbox{dom}(\mdbar)\cap \mbox{dom}(\mdbar^{\ast})$
   with support in a neighborhood of the singularity.
 To bound the terms
\begin{equation*}
\int_X \gamma^{2-2\epsilon} | \nabla f_j|^2 dV
\end{equation*}
we use
\begin{align*}
\int_X \gamma^{2-2\epsilon} | \nabla f|^2 dV
 &\lesssim \|\gamma^{2-\epsilon} \overline{L}f\|^2 +
 \|\gamma^{2-\epsilon} Lf\|^2\\
  &\lesssim
    \| \gamma \overline{L}\left( \gamma^{1-\epsilon}f\right) \|^2 +
 \| \gamma L \left( \gamma^{1-\epsilon}f\right)\|^2 + \|
 \xi_{-\epsilon} f\|^2.
\end{align*}
Using the estimate $\| \xi_{-\epsilon} f\|_{L^2(X)}
 \lesssim
  \|f\|_{L^p(X)}$
for $p>4/(2-\epsilon)$ and Proposition \ref{subprop} we have
\begin{equation*}
\| f\|_{W^{\epsilon}(X)}^2
 \lesssim
 \| \gamma \overline{L}\left( \gamma^{1-\epsilon}f\right) \|^2 +
 \| \gamma L \left( \gamma^{1-\epsilon}f\right)\|^2 + \| f\|_{L^p(X)}^2
\end{equation*}

As in Section \ref{subnorms} above, given $0\le\epsilon\le1$ and
  a form $f\in L^{p}_{(0,1)}(X)\cap
 \mbox{dom}(\mdbar)\cap \mbox{dom}(\mdbar^{\ast})$ for $p>4/(2-\epsilon)$,
  we have
  $\gamma^{1-\epsilon}
  f\in L^{2,-1}_{(0,1)}(X)$, and
we can apply Theorem \ref{w1} to
 the form $\gamma^{1-\epsilon}f$ in place of $f$ to conclude
\begin{align}
 \nonumber
\|f\|_{W^{\epsilon}(X)}^2
 &\lesssim
\|\gamma \mdbar \left(\gamma^{1-\epsilon} f\right)\|^2 + \| \gamma
\mdbar^{\ast} \left(\gamma^{1-\epsilon} f\right)\|^2
  +\|f\|_{L^p(X)}^2\\
  \label{subcom}
  &\lesssim
\| \gamma^{2-\epsilon} \mdbar  f\|^2 + \|\gamma^{2-\epsilon}
\mdbar^{\ast} f\|^2
  +\|f\|_{L^p(X)}^2.
\end{align}
 We can now use a partition of unity to remove the assumption of
 support near the singularity.  We use
   estimates given by (\ref{subcom}) for
 forms with support in a neighborhood of the singularity, and
 subelliptic $1/2$-estimates for forms with support near the
 boundary $\partial X$ (Theorem 5.1.2 \cite{CS}):
 \begin{equation}
 \label{subbnd}
 \|f\|_{W^{\epsilon}(X)} \lesssim \|f\|_{W^{1/2}(X)}
  \lesssim \|\mdbar f\|+\|\mdbar^{\ast}f\|,
   \qquad 0\le\epsilon\le1/2.
 \end{equation}
 Combining (\ref{subcom}) and (\ref{subbnd}),
 we arrive at
 our
subelliptic estimate which we write in the form of
\begin{thrm}
 Let $0\le \epsilon\le 1/2$, $p>4/(2-\epsilon)$,
  and $f\in L^{p}_{(0,1)}(X)\cap
 \mbox{dom}(\mdbar)\cap \mbox{dom}(\mdbar^{\ast})$.  Then the
 following
 subelliptic estimate holds
\begin{equation*}
 \| f\|_{W^{\epsilon}(X)}
  \lesssim
\|  \mdbar f\|+ \| \mdbar^{\ast} f\|
 +\| f\|_{L^{p}(X)}
  .
\end{equation*}
\end{thrm}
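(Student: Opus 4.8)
The plan is to localize with a partition of unity and handle the smooth and singular regions by completely different mechanisms. Fix $\chi_0\in C^\infty_0$ equal to $1$ near the origin and supported in a small neighborhood of the singularity, and split $f=\chi_0 f+(1-\chi_0)f$. On the support of $1-\chi_0$ the pseudometric $g$ is smooth and nondegenerate and $\partial X$ is strictly pseudoconvex, so there $X$ is an ordinary smoothly bounded strictly pseudoconvex domain in $\C^2$; the classical Kohn subelliptic $1/2$-estimate (\cite{CS}, Theorem 5.1.2) gives
\begin{equation*}
\|(1-\chi_0)f\|_{W^{1/2}(X)}\lesssim \|\dq f\|+\|\dq^{\ast} f\|,
\end{equation*}
and since $0\le\epsilon\le 1/2$ we bound $\|\cdot\|_{W^\epsilon}\lesssim\|\cdot\|_{W^{1/2}}$. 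This is the only step forcing $\epsilon\le 1/2$; for compactly supported forms there is no boundary contribution and the interior argument below runs up to $\epsilon=1$ (cf. Theorem \ref{weightestI}).

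The substance is the estimate for $\chi_0 f$. I would first apply Proposition \ref{subprop} to each component of $\chi_0 f$, which trades the $W^\epsilon$-norm for the weighted norm $\|\chi_0 f\|_{W^{1,1-\epsilon}(X)}$ plus the harmless term $\|f\|_{L^p(X)}$ (the zeroth-order part of $W^{1,1-\epsilon}$ is $\|f\|_{L^{2,1-\epsilon}}\lesssim\|f\|_{L^{2,-\epsilon}}$, since $\gamma\lesssim 1$). By the definition (\ref{sobolev1}) and the fact that each $x$-derivative is a bounded combination of $\gamma L_j$ and $\gamma\overline{L}_j$, the first-order part satisfies
\begin{equation*}
\int_X \gamma^{2(1-\epsilon)}|\nabla f|^2\,dV\lesssim \|\gamma^{2-\epsilon}\overline{L}f\|^2+\|\gamma^{2-\epsilon}Lf\|^2.
\end{equation*}
I then pull the weight inside the frame fields, writing $\gamma^{2-\epsilon}Lf=\gamma L(\gamma^{1-\epsilon}f)+\xi_{-\epsilon}f$ and similarly for $\overline{L}$, the remainder appearing because $L(\gamma^{1-\epsilon})$ is of type $\xi_{-\epsilon}$.

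The decisive step is to apply Theorem \ref{w1} not to $f$ but to the rescaled form $g:=\gamma^{1-\epsilon}f$. The hypothesis $f\in L^p$ with $p>4/(2-\epsilon)$ yields $\|\xi_{-\epsilon}f\|_{L^2}\lesssim\|f\|_{L^p}$, i.e. $f\in L^{2,-\epsilon}(X)$, whence $g\in L^{2,-1}_{(0,1)}(X)$ — exactly the class Theorem \ref{w1} demands. That theorem bounds $\|g\|_{W^1}^2$ by $\|\gamma\dq g\|^2+\|\gamma\dq^{\ast}g\|^2+\|\xi_{-1}g\|^2$. Expanding $\gamma\dq(\gamma^{1-\epsilon}f)=\gamma^{2-\epsilon}\dq f+\xi_{-\epsilon}f$ (and likewise for $\dq^{\ast}$), and using $\gamma\lesssim 1$ on the bounded domain $B$ so that $\gamma^{2-\epsilon}\lesssim 1$, the weighted operator terms collapse to the unweighted $\|\dq f\|,\|\dq^{\ast}f\|$, while every $\xi_{-\epsilon}$-remainder together with $\|\xi_{-1}g\|=\|\xi_{-\epsilon}f\|$ is absorbed into $\|f\|_{L^p}$. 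Adding the boundary estimate from the first paragraph and absorbing the small-constant multiples of $\|f\|_{W^\epsilon}$ (finite, by Proposition \ref{subprop}) into the left side produces the asserted inequality.

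The main obstacle I anticipate is precisely this weight bookkeeping. One must verify that commuting $\gamma^{1-\epsilon}$ past the singular fields $L_j,\overline{L}_j$ and past $\dq,\dq^{\ast}$ leaves only $\xi_{-\epsilon}$-order remainders, since these are the terms whose $L^2$-norm is controlled by $\|f\|_{L^p}$ only at the sharp threshold $p>4/(2-\epsilon)$; a heavier singularity in the remainder, or a larger $\epsilon$, would break the H\"older estimate $\|\xi_{-\epsilon}f\|_{L^2}\lesssim\|f\|_{L^p}$ on which the entire scheme rests.
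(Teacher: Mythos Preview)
Your proposal is correct and follows essentially the same route as the paper: localize by a partition of unity, invoke the classical $1/2$-estimate near the strictly pseudoconvex boundary, and near the singularity apply Proposition~\ref{subprop} together with Theorem~\ref{w1} to the rescaled form $\gamma^{1-\epsilon}f$, with the commutator remainders controlled by $\|\xi_{-\epsilon}f\|\lesssim\|f\|_{L^p}$. One tiny slip in the explanation: $L(\gamma^{1-\epsilon})$ is of type $\xi_{-1-\epsilon}$ (since $L\sim\gamma^{-1}\partial$), so it is $\gamma\,L(\gamma^{1-\epsilon})$ that is of type $\xi_{-\epsilon}$; your displayed identity $\gamma^{2-\epsilon}Lf=\gamma L(\gamma^{1-\epsilon}f)+\xi_{-\epsilon}f$ is nonetheless correct for exactly this reason.
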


\end{document}